\newtheorem{theorem}{Theorem}[section]
\newtheorem{proposition}[theorem]{Proposition}
\newtheorem{corollary}[theorem]{Corollary}
\newtheorem{lemma}[theorem]{Lemma}
\newtheorem{conjecture}[theorem]{\bf Conjecture}
\numberwithin{equation}{section}
\theoremstyle{remark}
\newtheorem{remark}[theorem]{Remark}
\newcommand{\CC}{\mathcal{C}}
\newcommand{\J}{\mathcal{J}}
\newcommand{\NM}{\perp\!\!M}
\newcommand{\Ind}{\operatorname{Ind}}
\newcommand{\Span}{\operatorname{Span}}
\newcommand{\trace}{\operatorname{trace}}
\newcommand{\blue}{\textcolor{blue}}
\def\dd{\mathrm{d}}
\def\<{\langle}
\def\>{\rangle}
\begin{document}
\title[On the index of minimal $2$-tori in the $4$-sphere]{On the index of minimal $2$-tori in the $4$-sphere}

\author[Rob Kusner $\&$ Peng Wang]{Rob Kusner $\&$ Peng Wang}

\address {Department of Mathematics, University of Massachusetts, Amherst, MA 01003, USA}
\email{profkusner@gmail.com, kusner@umass.edu}
\address{School of Mathematics and Statistics, FJKLMAA, Key Laboratory of Analytical Mathematics and Applications (Ministry of Education), Fujian Normal University, Fuzhou 350117, P. R. China}
\email{pengwang@fjnu.edu.cn, netwangpeng@163.com}

\date{\today}
\maketitle

\begin{center} Dedicated to Professor Eugenio Calabi for his century of inspiration\end{center}

\begin{abstract}
In this note we prove that any minimal $2$-torus in $S^4$ has Morse index at least $6$, with equality if and only if it is congruent to the Clifford torus in some great $S^3\subset S^4$.
For a minimal $2$-torus in $S^n$ with vanishing Hopf differential, we show that its index is at least $n+3$, and that this estimate is sharp:
the equilateral $2$-torus fully embedded in
$S^5\subset S^n$ as a homogeneous minimal surface in $S^n$ has index exactly $n+3$.
\end{abstract}

{\bf Keywords:}  Minimal surface; Morse index; Calabi-Hopf differentials; Clifford torus. \\

{\bf MSC(2010):  {53A10}}

\section{Introduction}

The study of minimal subvarieties in the unit $n$-sphere $S^n$ is an important topic in global differential geometry. The work  of Simons \cite{Simons} has been especially influential: in particular, he investigated the second variation formula, observing that any closed minimal subvariety in $S^n$ is unstable and estimating its Morse index.   The great $2$-sphere and the Clifford torus are the simplest minimal surfaces in $S^3$, with Morse indices $1$ and $5$, respectively. Urbano \cite{Urbano} showed these are the only minimal surfaces in $S^3$ with index at most $5$.  This is a key ingredient in the recent proof \cite{Marques} by Marques and Neves of the Willmore conjecture in $S^3$.  As they point out \cite{Marques2}, strong evidence for the analogous conjecture in higher codimension would be a generalization of Urbano's result to minimal $2$-tori in $S^n$, which is the principal purpose of this note.

Urbano's proof relies heavily on the codimension-$1$ assumption to find global eigenfunctions for the Jacobi operator associated to the second variation of area.  In higher codimension we need to consider sections of the normal bundle instead of functions, and finding useful global eigensections is a challenging task.
For this reason, we first focus our attention on minimal $2$-tori in $S^4$, where the normal-vector-valued holomorphic quadratic differential $\CC$
--- introduced by Calabi \cite{Calabi} and whose square is the scalar-valued Hopf differential $\mathcal{H}$ ---
provides two natural test sections of the normal bundle.  Furthermore, the codimension-$2$ situation lets us show the normal bundle is trivial provided $\mathcal{H}$ does not vanish.  In this case we can also define an almost complex structure $\J$ on the normal bundle. The action of $\J$ on $\CC$ then provides more test sections, leading to our main result:

\begin{theorem}\label{thm-main} Let $f:T^2\rightarrow S^4$ be a minimal torus in the unit $4$-sphere. Then we have  $\Ind(f)\geq 6$ and equality holds if and only if $f$ is congruent to the Clifford torus in some great $S^3\subset S^4$.
\end{theorem}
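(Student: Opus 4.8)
\emph{Proof proposal.} I would work throughout with the Jacobi operator $L=-\Delta^{\perp}-\mathcal{S}-2$ on sections of the normal bundle $N=Nf$, where $\mathcal{S}\phi=\sum_{i,j}\langle II(e_i,e_j),\phi\rangle\,II(e_i,e_j)\ge 0$, so that $\mathrm{Ind}(f)$ is the largest dimension of a subspace of $\Gamma(N)$ on which $Q(\phi):=\int_{T^{2}}\!\bigl(|\nabla^{\perp}\phi|^{2}-\langle\mathcal{S}\phi,\phi\rangle-2|\phi|^{2}\bigr)$ is negative definite. The first step is to exhibit, for \emph{any} minimal torus, a $5$-dimensional space of negative directions. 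For $v\in\mathbb{R}^{5}$ let $v^{N}$ be the part of the constant field $v$ normal to $f$ in $S^{4}$; the ambient structure equations together with Codazzi and $\vec H\equiv 0$ give $\nabla^{\perp}_{e_i}v^{N}=-II(e_i,v^{T})$ and then $\Delta^{\perp}v^{N}=-\mathcal{S}v^{N}$, so $Lv^{N}=-2v^{N}$. Moreover $v\mapsto v^{N}$ is injective: if $v^{N}\equiv 0$ then $h=\langle v,\cdot\rangle|_{T^{2}}$ satisfies Obata's equation $\nabla^{2}h=-h\,g$, which on a torus forces $h$ constant and hence $v=0$. Thus $\mathcal{V}:=\{v^{N}:v\in\mathbb{R}^{5}\}$ has dimension $5$ and $Q$ restricted to it equals $-2\langle\,\cdot\,,\,\cdot\,\rangle_{L^{2}}$, so $\mathrm{Ind}(f)\ge 5$ already.

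\emph{The Hopf differential sections.} Next I would bring in $\Phi:=II(\partial_{z},\partial_{z})$. Codazzi plus minimality give $\nabla^{\perp}_{\bar z}\Phi=0$, so $\langle\Phi,\Phi\rangle$ is a holomorphic quartic differential on $T^{2}$, hence constant; after rotating the global conformal coordinate $z$ we may assume $\langle\Phi,\Phi\rangle=c\ge 0$. Let $\xi_{1},\xi_{2}$ be the normalized real and imaginary parts, so $\langle\xi_{1},\xi_{2}\rangle=0$, $|\xi_{1}|\ge|\xi_{2}|$, $|\xi_{1}|^{2}-|\xi_{2}|^{2}=4c\,e^{-4u}$, and $|\xi_{1}|^{2}+|\xi_{2}|^{2}=\tfrac12|A|^{2}=1-K$ by Gauss. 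Writing $\xi_{2}=\mu J\xi_{1}$, with $J$ the rotation by $90^{\circ}$ on $N$ (which is $\nabla^{\perp}$-parallel), the identity $\nabla^{\perp}_{\bar z}\Phi=0$ expresses $\nabla^{\perp}\xi_{1},\nabla^{\perp}\xi_{2}$ through $du$, $d\mu$ and the normal connection form, hence controls $\int_{T^{2}}|\nabla^{\perp}\xi_{i}|^{2}$ by curvature data. Feeding this, the Gauss and Ricci equations, $\int_{T^{2}}K=0$, and — when $\xi_{1}$ is nowhere zero (e.g. $c>0$) so that $\xi_{1}/|\xi_{1}|$ and $J(\xi_{1}/|\xi_{1}|)$ trivialize $N$ — also $\int_{T^{2}}K^{\perp}=0$, into $Q$, the goal is the \emph{key estimate}: $Q$ is negative definite on the $6$-dimensional space $\mathcal{V}\oplus\mathbb{R}\xi_{1}$, and if $\xi_{2}\not\equiv 0$ it is negative definite on the $7$-dimensional space $\mathcal{V}\oplus\mathrm{span}\{\xi_{1},\xi_{2}\}$; here $J\xi_{1},J\xi_{2}$ enter through the companion formulas for $Q(J\xi_{i})$, whose sum with $Q(\xi_{1})+Q(\xi_{2})$ collapses, via Gauss, to $\int 2(|\nabla^{\perp}\xi_{1}|^{2}+|\nabla^{\perp}\xi_{2}|^{2})-6\,\mathrm{Area}-2\int K^{2}$. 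This would give $\mathrm{Ind}(f)\ge 6$ always, and $\mathrm{Ind}(f)\ge 7$ whenever $\xi_{2}\not\equiv 0$. I expect this estimate — arranging the curvature identities so that the conformal factor and $\int K$ cancel and the signs come out right, uniformly including the superminimal subcase $c=0$ (where $|\mu|\equiv 1$, so $\nabla^{\perp}\xi_{i}$ must be read off from the Ricci equation rather than from $\nabla^{\perp}_{\bar z}\Phi=0$ alone) — to be the main obstacle.

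\emph{The equality case.} If $\mathrm{Ind}(f)=6$ then, by the key estimate, $\xi_{2}\equiv 0$, so $\Phi$ is a real multiple of the nowhere-zero section $\xi_{1}$, and $\nabla^{\perp}_{\bar z}\Phi=0$ forces $\hat n:=\xi_{1}/|\xi_{1}|$ to be $\nabla^{\perp}$-parallel. Since $II$ then takes values in $\mathbb{R}\hat n$, the $4$-plane $\mathrm{span}\{f,\hat n,df(TM)\}$ is parallel along $T^{2}$, hence constant, so $f(T^{2})$ lies in a totally geodesic $S^{3}\subset S^{4}$. On such $f$ the normal bundle splits $\nabla^{\perp}$-orthogonally as $N_{S^{3}}f\oplus\mathbb{R}e_{4}$, with $e_{4}$ the constant unit normal to $S^{3}$ and $\langle II,e_{4}\rangle=0$, so $L_{S^{4}}=L_{S^{3}}\oplus(-\Delta_{T^{2}}-2)$ and therefore $\mathrm{Ind}_{S^{4}}(f)=\mathrm{Ind}_{S^{3}}(f)+\#\{k:\lambda_{k}(-\Delta_{T^{2}})<2\}\ge\mathrm{Ind}_{S^{3}}(f)+1$. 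As the left side is $6$ and $\mathrm{Ind}_{S^{3}}(f)\ge 5$ by Urbano's theorem \cite{Urbano}, we get $\mathrm{Ind}_{S^{3}}(f)=5$, hence $f$ is the Clifford torus in that $S^{3}$, again by \cite{Urbano}. Conversely the Clifford torus in a great $S^{3}$ has $\mathrm{Ind}_{S^{3}}=5$ and $\lambda_{1}(-\Delta_{T^{2}})=2$, so $\#\{k:\lambda_{k}<2\}=1$ and $\mathrm{Ind}_{S^{4}}=6$, which finishes the proof.
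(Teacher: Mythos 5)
Your setup---the $5$-dimensional space $\mathcal{V}$ of normal projections of constant vectors, each a $(-2)$-eigensection, with injectivity of $v\mapsto v^{N}$ on a torus---is correct and matches the paper's Lemma \ref{lemma-simons}, and your reduction of the equality case to Urbano's theorem is sound. But the heart of your argument, the ``key estimate'' that $Q$ is negative definite on $\mathcal{V}\oplus\mathbb{R}\xi_{1}$ (resp.\ on $\mathcal{V}\oplus\mathrm{span}\{\xi_{1},\xi_{2}\}$), is exactly the step you leave unproved, and as sketched it would not close. Negativity of $Q$ on individual test sections, or of the sum $Q(\xi_{1})+Q(\xi_{2})+Q(J\xi_{1})+Q(J\xi_{2})$, does not give negative definiteness on the span: you never control the cross terms with $\mathcal{V}$ or rule out $\xi_{i}\in\mathcal{V}$, and your collapsed integral $\int 2(|\nabla^{\perp}\xi_{1}|^{2}+|\nabla^{\perp}\xi_{2}|^{2})-6\,\mathrm{Area}-2\int K^{2}$ has no evident sign. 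The missing ingredient is the pointwise (traceless Simons) identity: writing $\Omega=\Omega_{1}+i\Omega_{2}$ for the normal part of $f_{zz}$, the Codazzi and Ricci equations give $\Delta^{\perp}\Omega_{1}=-8e^{-4\rho}|\Omega_{2}|^{2}\Omega_{1}$ and $\tilde{\mathcal{A}}(\Omega_{1})=8e^{-4\rho}|\Omega_{1}|^{2}\Omega_{1}$, hence $\mathcal{L}(\Omega_{1})=2\bigl(1+4e^{-4\rho}\langle\Omega,\Omega\rangle\bigr)\Omega_{1}$ pointwise. When $\mathcal{H}\neq0$ (normalize $\langle\Omega,\Omega\rangle=1$) the Rayleigh quotient of $\Omega_{1}$ is therefore strictly below $-2$, so $\lambda_{1}<-2$; combined with the $5$-dimensional $(-2)$-eigenspace this yields $Ind(f)\geq6$ by counting eigenvalues, with no cross-term or independence check needed. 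The strict bound $Ind(f)\geq7$ for tori full in $S^{4}$ then comes from the additional genuine $(-2)$-eigensections $J\Omega_{1},J\Omega_{2}$ together with a linear-independence argument via the formula for $(E^{\perp})_{z}$.

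Your proposal also does not dispose of the superminimal case $\mathcal{H}\equiv0$ in $S^{4}$, where your mechanism degenerates: there $Q(\xi_{i})=-2\|\xi_{i}\|^{2}$ exactly, so strict negative definiteness on a $7$-dimensional space would require proving $\xi_{1},\xi_{2}\notin\mathcal{V}$ and handling the span, which you do not do. The paper avoids this entirely by a different argument (Ejiri's count of holomorphic normal sections plus the twistor degree bound $A(T^{2})\geq12\pi$), which gives $Ind(f)\geq12$ in that case. So while your overall architecture (conformal fields, Hopf-differential sections, Urbano for the equality case) agrees with the paper's, the two analytic inputs that actually make it work---the pointwise Jacobi identity for $\Omega_{1},\Omega_{2}$, and a separate treatment of the isotropic case---are absent.
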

\noindent
Theorem \ref{thm-main} is a corollary of Theorem  \ref{prop-H-no} and Proposition \ref{prop-H-va-s4}.

\subsection{Plan of the paper}  We begin by reviewing (in Section 2) the basic theory of minimal surfaces in $S^n$. Then we prove (in Section 3) our main results, considering first the case of minimal tori with nonvanishing Hopf differential $\mathcal{H}$, and next the {\em superminimal} case with vanishing $\mathcal{H}$.  For the latter case, we give an index estimate for {\em all} minimal $2$-tori in $S^n$.  We conclude (in Section 4) with some remarks about the higher genus case as well as the nonorientable case.

\subsection{Acknowledgements} We thank Misha Karpukhin, Ping Li, Xiang Ma, Chao Xia, and the anonymous referee for their comments on earlier drafts of this paper.
Rob Kusner was supported in part by National Science Foundation grants PHY-1607611 at the Aspen Center for Physics in June 2017 and DMS-1439786 while visiting ICERM during Spring 2018.  Peng Wang was supported in part by   Projects 11971107 and 12371052 of NSFC and CSC.

\section{Minimal surfaces in $S^n$}

Consider an isometric  immersion $f:M\rightarrow S^n$ of a closed Riemannian surface $M$ into the unit sphere $S^n\subset\mathbb R^{n+1}$.
Its mean curvature vector field ${H}:= \trace(\hbox{II}) \in \Gamma(\NM)$ is a section of the normal bundle $\NM$, where $\hbox{II}$ denotes the $\NM$-valued second fundamental form dual to the shape operator $\mathcal A$ of $f$.  Define the associated section $\tilde{\mathcal{A}}$ of the symmetric endomorphism bundle of $\NM$ so that for any $V, W\in\Gamma(\NM)$
\[\<\tilde{\mathcal A}(V),W\>:=\<\mathcal A(V),\mathcal A(W)\>,\] and observe that the trace of $\tilde{\mathcal{A}}$ equals the square length
$S:=|\mathcal{A}|^2=|\hbox{II}|^2$ of the second fundamental form (see \cite[pages 67-69]{Simons} for more details).

\subsection{Variation formulas for the Area functional and the Morse index}  Suppose $f_t$ is a variation of $f_0=f:M\rightarrow S^n$ with $\frac{\partial}{\partial t}(f_t)|_{t=0}=V\in \Gamma(\NM)$, and let $A(t)$ denote the area of $f_t$. Then it is well known \cite{Simons, CW}
that the {\em first variation formula} is
\[ A'(0)=-\int_M\< {H}, V\>.\]
So for a minimal $f:M\rightarrow S^n$ the mean curvature {is}  ${H}\equiv0$, and in this case the {\em second variation formula} is
\begin{equation}\label{index-form}
A''(0)=-\int_M\<\Delta^{\perp}V+\mathcal{R}(V)+\tilde{\mathcal{A}}(V),V\>=-\int_M\< \mathcal{L}(V), V\>=:Q(V,V).
\end{equation}
Here the Laplace operator $\Delta^{\perp}$ and the {\em Jacobi operator}
\begin{equation}\label{eq-L} \mathcal{L}:=\Delta^{\perp}+\mathcal{R}+\tilde{\mathcal{A}}=\Delta^{\perp}+2+\tilde{\mathcal{A}}
\end{equation}
act on sections of the normal bundle $\NM$, noting that for surfaces in $S^n$, the partial Ricci endomorphism simplifies to $\mathcal{R}(V)=2V$.
In the special case of an oriented surface $f:M\rightarrow S^3$, we have $V=gN$ with $N$ the orienting unit normal vector field and $g$ a function on $M$,
and so the Jacobi operator further simplifies to
\[\mathcal L(V)=(\Delta g +2g+Sg)N,\] where $\Delta$ is the Laplacian for functions on $M$.
For any codimension, the Jacobi operator $\mathcal L$ is a strongly elliptic, self-adjoint operator acting on sections of $\NM$, with finite-dimensional eigenspaces and a discrete spectrum
of real eigenvalues $\{\alpha_j\}$ which is bounded from below.

The {\em Morse index} $\Ind(f)$ of the minimal immersion $f:M\rightarrow S^n$ is defined to be the index of the quadratic form $Q$ associated (\ref{index-form}) with the  {second variation}  of $f$: this is the maximal dimension of a subspace of normal sections on which $Q$ is negative definite, that is, the sum of the dimensions of the
eigenspaces of $\mathcal{L}$ belonging to its negative eigenvalues.

\subsection{Computations in a complex coordinate {and}  Calabi-Hopf differentials} Now assume $M$ is orientable, and regard it as a Riemann surface
with the conformal structure induced from the minimal immersion $f:M\rightarrow S^n$.
Since $f$ is conformal, in any local complex coordinate chart $z$ on $M$ one has {$\<f_z,f_z\>\equiv0$}
and the induced metric has conformal factor $|f_z|^2=\frac{1}{2}e^{2\rho}$.
The minimal surface equation  {$f_{z\bar{z}}+|f_z|^2f\equiv0$}
becomes the first of the {\em structure equations}
\begin{equation}\label{eq-moving}
  \left\{\begin{split}
  f_{\bar{z}z}&=-\frac{1}{2}e^{2\rho}f,\\
  f_{zz}&=\Omega+2\rho_zf_z,\\
  \psi_{z}&=\nabla^{\perp}_z\psi -2e^{-2\rho}\<\psi,\Omega\>f_{\bar z}\\
\end{split}\right.
\end{equation}
for the adapted local frame field $\{f_{\bar{z}}, f_z, \psi\}$.
Here $\{f_{\bar{z}}, f_z\}$ is a local tangent frame field, $\psi$ is an arbitrary section of the normal bundle $\NM$ for $f:M\rightarrow S^n$, and
the connection $\nabla^{\perp}_z$ takes values in the complexified normal bundle $\NM\otimes\underline{\mathbb C}$.
The terms on the right-hand sides of the second and third equations correspond to the splitting into normal and tangential components:
in particular, the normal part of $f_{zz}$
defines the local section $\Omega$ of $\NM\otimes\underline{\mathbb C}$.
One must instead tensor $\NM$ with the square of the canonical complex line bundle $\mathbb K$ for $M$ to get a global section:
the normal-vector-valued holomorphic quadratic {\em Calabi differential} $$\CC:=\Omega\dd z^2 \in \Gamma(\NM\otimes{\mathbb K}^2),$$ whose square is the scalar-valued holomorphic quartic {\em Hopf differential}
\begin{equation*}\label{eq-Hopf}
\mathcal{H}:=\<\CC,\CC\>=\<\Omega,\Omega\>\dd z^4= \<f_{zz},f_{zz}\>\dd z^4 \in \Gamma(\NM\otimes{\mathbb K}^4).
\end{equation*}

Now set
$e_1=e^{-\rho}(f_z+f_{\bar{z}}),~~e_2=ie^{-\rho}(f_z-f_{\bar{z}})$, so that $\{e_1,e_2\}$ is an orthonormal frame for $TM$, with $\{\omega_1, \omega_2\}$ its dual coframe.
Let $\{\psi_{\nu}\}$ be an orthonormal frame for $\NM$. Then
the second fundamental form $ {\hbox{II}}=:\sum_{\nu,i,j}h^{\nu}_{ij}\psi_{\nu}\omega_i\omega_j$ and we have
\begin{equation}\label{eq-Omega-h}
\Omega=\frac{1}{4}e^{ 2\rho}\sum_{\nu}\left(h^{\nu}_{11}-h^{\nu}_{22}-2ih^{\nu}_{12}\right)\psi_{\nu}.\end{equation}
Since $H=0$, we get
\[|\Omega|^2=\frac{1}{4}e^{ 4\rho}\sum_{\nu}\left((h^{\nu}_{11})^2+(h^{\nu}_{12})^2\right).\]
Therefore we can use $\Omega$ to express the  {square length} of the second fundamental form
\[S=| {\hbox{II}}|^2=8e^{-4\rho}|\Omega|^2\]
for $f$, as well as express the Gauss, Codazzi and Ricci equations
\begin{equation}\label{eq-integrability}
  \left\{\begin{split}
-K+1&=\frac{S}{2}=4e^{-4\rho}|\Omega|^2,\\
  \nabla^{\perp}_{\bar{z}}\Omega&=0,\\
R_{z\bar{z}}\psi&=\nabla^{\perp}_{\bar z}\nabla^{\perp}_z\psi-\nabla^{\perp}_z \nabla^{\perp}_{\bar z}\psi=2e^{-2\rho}(\<\psi,\Omega\>\bar\Omega-\<\psi,\bar\Omega\>\Omega),\\
\end{split}\right.
\end{equation}
and an elementary computation verifies:
\begin{lemma}
 The Laplacian $\Delta^{\perp}$ on smooth normal sections $\Gamma(\NM)$ has the form
 \begin{equation}\label{eq-laplacian-def}
   \Delta^{\perp}\psi=2e^{-2\rho}\left(\nabla^{\perp}_{\bar z}\nabla^{\perp}_z\psi+\nabla^{\perp}_z \nabla^{\perp}_{\bar z}\psi\right).
 \end{equation}
\end{lemma}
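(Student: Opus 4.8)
The plan is to deduce the formula straight from the definition of the connection Laplacian on $\NM$, using that $z$ is a conformal coordinate. Recall that $\Delta^{\perp}$ is the $g$–trace of the full second covariant derivative of $\psi$, where $\nabla^{\perp}$ differentiates the bundle slot and the Levi-Civita connection $\nabla$ of the induced metric $g$ differentiates the tangent slot; in a local chart $(x^i)$ this reads
\[
\Delta^{\perp}\psi \;=\; g^{ij}\bigl(\nabla^{\perp}_{\partial_i}\nabla^{\perp}_{\partial_j}\psi-\nabla^{\perp}_{\nabla_{\partial_i}\partial_j}\psi\bigr).
\]
Since $z=x^1+\mathrm{i}x^2$ is conformal, $g=e^{2\rho}\bigl((\dd x^1)^2+(\dd x^2)^2\bigr)$, so $g^{ij}=e^{-2\rho}\delta^{ij}$.

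The first step is to observe that the $\nabla_{\partial_i}\partial_j$ correction term disappears in two dimensions: for a conformally flat metric one has $\Gamma^k_{ij}=\delta_{ik}\partial_j\rho+\delta_{jk}\partial_i\rho-\delta_{ij}\partial_k\rho$, whence $g^{ij}\Gamma^k_{ij}=e^{-2\rho}\bigl(\Gamma^k_{11}+\Gamma^k_{22}\bigr)=0$ for each $k$ (equivalently $\nabla_{\partial_1}\partial_1+\nabla_{\partial_2}\partial_2=0$), and so $\Delta^{\perp}\psi=e^{-2\rho}\bigl((\nabla^{\perp}_{\partial_1})^2\psi+(\nabla^{\perp}_{\partial_2})^2\psi\bigr)$. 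The second step is the purely algebraic identity: from $\nabla^{\perp}_z=\tfrac12(\nabla^{\perp}_{\partial_1}-\mathrm{i}\nabla^{\perp}_{\partial_2})$ and $\nabla^{\perp}_{\bar z}=\tfrac12(\nabla^{\perp}_{\partial_1}+\mathrm{i}\nabla^{\perp}_{\partial_2})$ one expands $\nabla^{\perp}_z\nabla^{\perp}_{\bar z}+\nabla^{\perp}_{\bar z}\nabla^{\perp}_z=\tfrac12\bigl((\nabla^{\perp}_{\partial_1})^2+(\nabla^{\perp}_{\partial_2})^2\bigr)$, the mixed terms $\nabla^{\perp}_{\partial_1}\nabla^{\perp}_{\partial_2}$ cancelling. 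Substituting into the previous expression yields $\Delta^{\perp}\psi=2e^{-2\rho}\bigl(\nabla^{\perp}_z\nabla^{\perp}_{\bar z}\psi+\nabla^{\perp}_{\bar z}\nabla^{\perp}_z\psi\bigr)$, as claimed.

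There is no real obstacle here — the computation is elementary, as the statement announces. The only points deserving a little care are the sign/normalization convention (so that $\Delta^{\perp}$ is the negative-semidefinite operator $\operatorname{tr}_g\nabla^2$, consistent with its use in the Jacobi operator later on) and not dropping the $\nabla_{\partial_i}\partial_j$ term in the Hessian, which here vanishes precisely because $z$ is conformal. As an alternative route one could instead start from the moving-frame relations \eqref{eq-moving}: the third line there identifies $\nabla^{\perp}_z\psi$ with the $\NM$–component of the ambient derivative $\psi_z$, and computing the ambient connection Laplacian of $\psi$ and projecting onto $\NM$ reproduces the same identity; but the coordinate computation above is the shortest path.
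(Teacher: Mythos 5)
Your computation is correct, and it is exactly the ``elementary computation'' the paper alludes to but omits: the paper states this lemma with no proof at all. The two points you flag --- that the trace of the Hessian correction $g^{ij}\nabla_{\partial_i}\partial_j$ vanishes for a conformal metric in dimension two, and the algebraic identity $(\nabla^{\perp}_{\partial_1})^2+(\nabla^{\perp}_{\partial_2})^2=2(\nabla^{\perp}_z\nabla^{\perp}_{\bar z}+\nabla^{\perp}_{\bar z}\nabla^{\perp}_z)$ --- are precisely the content, and your normalization ($g=e^{2\rho}|\dd z|^2$ from $|f_z|^2=\tfrac12 e^{2\rho}$, negative-semidefinite $\Delta^{\perp}$) matches the paper's conventions.
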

\begin{proof}
By definition (see, for example  {\cite[page 41]{CW}}) we have
\[\Delta^{\perp}\psi=\nabla^{\perp}_{e_1}\nabla^{\perp}_{e_1}\psi+\nabla^{\perp}_{e_2}\nabla^{\perp}_{e_2}\psi-\nabla^{\perp}_{\nabla_{e_1}e_1}\psi-\nabla^{\perp}_{\nabla_{e_2}e_2}\psi.\]
Choosing $e_1=e^{-\rho}(f_z+f_{\bar{z}})$ and $e_2=ie^{-\rho}(f_z-f_{\bar{z}})$ as above, we have
\[
\begin{split}\nabla^{\perp}_{\bar z}\nabla^{\perp}_z\psi=&\frac{1}{4}e^{2\rho}\left(\nabla^{\perp}_{e_1}\nabla^{\perp}_{e_1}\psi+\nabla^{\perp}_{e_2}\nabla^{\perp}_{e_2}\psi
-i\nabla^{\perp}_{e_1}\nabla^{\perp}_{e_2}\psi+i\nabla^{\perp}_{e_2}\nabla^{\perp}_{e_1}\psi\right.\\
&\left.+e_1(\rho)\nabla^{\perp}_{(e_1-ie_2)}\psi+ie_2(\rho)\nabla^{\perp}_{(e_1-ie_2)}\psi\right), \quad \text{and also}
\end{split}\]
\[
\begin{split}\nabla^{\perp}_{z}\nabla^{\perp}_{\bar z}\psi=&\frac{1}{4}e^{2\rho}
\left(\nabla^{\perp}_{e_1}\nabla^{\perp}_{e_1}\psi+\nabla^{\perp}_{e_2}\nabla^{\perp}_{e_2}\psi
+i\nabla^{\perp}_{e_1}\nabla^{\perp}_{e_2}\psi-i\nabla^{\perp}_{e_2}\nabla^{\perp}_{e_1}\psi\right.\\
&\left.+e_1(\rho)\nabla^{\perp}_{(e_1+ie_2)}\psi-ie_2(\rho)\nabla^{\perp}_{(e_1+ie_2)}\psi\right).
\end{split}\]
Summing, we obtain
\begin{equation}\label{eq-com}\nabla^{\perp}_{\bar z}\nabla^{\perp}_z\psi+\nabla^{\perp}_z\nabla^{\perp}_{\bar z}\psi=\frac{1}{2}e^{2\rho}\left(\nabla^{\perp}_{e_1}\nabla^{\perp}_{e_1}\psi
+\nabla^{\perp}_{e_2}\nabla^{\perp}_{e_2}\psi+e_1(\rho)\nabla^{\perp}_{e_1}\psi+e_2(\rho)\nabla^{\perp}_{e_2}\psi\right).\end{equation}
On the other hand, by \eqref{eq-moving} we have
\[\nabla_{f_z}f_z=2\rho_zf_z,\ \nabla_{f_z}f_{\bar z}=0.\]
Expansion with respect to $e_1,e_2$ gives
\[\nabla_{e_1}e_1=-e_2(\rho)e_2,\ \nabla_{e_2}e_2=-e_1(\rho)e_1\]
and substituting these into \eqref{eq-com} yields \eqref{eq-laplacian-def}.
\end{proof}
\subsection{A Simons-type identity for the Calabi differential}
Applying the Laplacian \eqref{eq-laplacian-def} to the normal-vector-valued Calabi differential, and using \eqref{eq-integrability}, gives this complex Simons-type \cite{Simons} identity
\begin{equation}\label{eq-Omega}
\Delta^{\perp}\Omega=4e^{-4\rho}(\<\Omega,\Omega\>\bar\Omega-\<\Omega,\bar\Omega\>\Omega).
\end{equation}

\subsection{Further remarks concerning the Jacobi eigenfields of Simons}

The following lemma generalizes the estimate of $\Ind(f)$ in \cite{Simons} (for the case of minimal hypersurfaces, see \cite{Urbano} and \cite{Perdomo}).
\begin{lemma}\label{lemma-simons} Let $f:M\rightarrow S^n$ be a minimal immersion of a closed Riemann surface $M$ with local complex coordinate $z$.
Let $Z\in\mathbb R^{n+1}\setminus\{0\}$ be a constant vector and let $Z^{\perp}$ be the normal bundle projection of $Z$.
Then $Z^{\perp}$ is an eigenfield for the Jacobi operator of $f$ with eigenvalue $-2$.
Moreover, the dimension of $\Span_{\mathbb R}$$\left\{Z^{\perp}|Z\in\mathbb R^{n+1}\right\}$ is $(n+1)$ if $f$ is not totally geodesic.
\end{lemma}

\begin{proof}
By  {\cite[Lemmas 5.1.4 and 5.1.6]{Simons},} we  {only need  to} prove\footnote{Our proof of Lemma \ref{lemma-simons} illustrates the use of a local complex coordinate.  The general result should be well known, and indeed a version appears in recent unpublished notes of Rick Schoen.}
 the last statement of the lemma. We argue indirectly, assuming $f$ is not totally geodesic, and thus $\Omega\not\equiv0$.
Let $\{E_0,\cdots,E_{n}\}$ be the standard basis of $\mathbb R^{n+1}$. Without loss of generality, we assume that $E_0^{\perp}$ is a linear combination of $E_1^{\perp},\cdots, E_{n}^{\perp}$. So there exist constants $c_1,\cdots,c_n$ such that
\[E_0^{\perp}=\sum_{j=1}^{n}c_jE_j^{\perp},~~~\,\,\hbox{ that is, } \,\,~~E_0=\sum_{j=1}^{n}c_jE_j+bf+b_1f_z+\bar{b}_1f_{\bar z},\]
where $b$ and $b_1$ are some functions. Taking derivatives, we have
\[b_zf+b_{1z}f_z+\bar{b}_{1z}f_{\bar z}+bf_z+b_1f_{zz}+\bar{b}_1f_{z\bar z}\equiv0.\]
By \eqref{eq-moving}, the normal bundle projection of the left-hand side is $b_1f_{zz}^{\perp}=b_1\Omega$, and thus $b_1\equiv0$, which implies $bf$ is constant. However $|f|=1$ and $f$ is not constant, so we have $b\equiv0$. Thus $\{E_0,\cdots,E_{n}\}$ is linearly dependent, which is a contradiction.
\end{proof}

\vspace{3mm}
Let $E\in \mathbb R^{n+1}$ be an arbitrary constant vector field. Then we have
\[E^\perp=E-\<E,f\>f-2e^{-2\rho}\<E,f_z\>f_{\bar z}-2e^{-2\rho}\<E,f_{\bar z}\>f_{ z}.\]
 Using \eqref{eq-moving}, we have the identity
\begin{equation}\label{eq-ENz}
  (E^\perp)_{z}= -2e^{-2\rho}\<E,\Omega\>f_{\bar z}-2e^{-2\rho}\<E,f_{\bar z}\>\Omega.
\end{equation}
from which we obtain  {
$\nabla^{\perp}_{z}E^{\perp}=-2e^{-2\rho} \<E,f_{\bar z}\>\Omega$} and
\begin{equation}\label{eq-ez-2}\<\Omega,E^{\perp}\>_{\bar z}=-2e^{-2\rho}|\Omega|^2\<E,f_{z}\>.\end{equation}

As a consequence, we obtain  {the following result.}
\begin{proposition}Let $f$ be a  {non-totally geodesic} minimal surface in $S^n$, that is, with Calabi differential $\CC=\Omega \dd z^2\not\equiv0$. Then the following are equivalent:
\begin{itemize}
\item $f$ is not full,
\item $\<\Omega,E^{\perp}\>\equiv0$ for some $E\in\mathbb R^{n+1}\setminus \{0\}$,
\item $\<E^{\perp},E^{\perp}\>\equiv  {\hbox{constant}}$ for some $E\in\mathbb R^{n+1}\setminus  \{0\}$.
\end{itemize}
\end{proposition}
\begin{proof}
If $f$ is not full, the result is obvious.
Let $E\in\mathbb R^{n+1}\setminus  \{0\}$.  {We have}
\[0\equiv\<E^{\perp},E^{\perp}\>_z=-4e^{-2\rho} \<E,f_{\bar z}\>\<\Omega,E^{\perp}\>,\]
 {if $\<E^{\perp},E^{\perp}\>\equiv \hbox{constant}$,}
so either $\<\Omega,E^{\perp}\>\equiv 0$  or $\<E,f_{\bar z}\>\equiv0$.  If the former, then by \eqref{eq-ez-2} we have
\[0\equiv\<\Omega,E^{\perp}\>_{\bar z}=-2e^{-2\rho}|\Omega|^2\<E,f_{z}\>,\] %\blue{}
%i.e., $\<e,f_{z}\>=0$.
so  $\<E,f_{\bar z}\>\equiv0$.  Hence we have $\<E,f_{\bar z}\>\equiv0$ in both cases.  Then $0\equiv\<E,f_{\bar z}\>_z=-\frac12e^{2\rho}\<E,f\>$ by \eqref{eq-moving}, and $f$ lies in the great subsphere
perpendicular to $E$, that is, $f$ is not full.%\blue{S}
\end{proof}

\section{Minimal tori in $S^n$}

In this section we focus on a surface $M$ of genus $1$, so that $\Omega$ defines two {\em global} sections of the normal bundle $\NM$, which we can use to construct test sections.  If the genus were greater than~$1$, then $\Omega$ might not be globally defined on $M$.

Assume that $M=T^2$ is a torus with a global complex coordinate $z$ (on its universal covering space $\mathbb C$).
Thus the Hopf differential $\mathcal{H}=\<\Omega,\Omega\>\dd z^4=a\dd z^4$ for some constant $a\in\mathbb C$.
By a homothety of $\mathbb C$, we can assume either $a=0$ or $a=1$, and we will consider these two cases separately.

\subsection{Technical details on the Calabi-Hopf differentials}
First recall some basic results about  $\Omega=\Omega_1+i\Omega_2$ with $\Omega_1,\Omega_2\in \Gamma(\NM)$. We can assume
 \[\<\Omega_1,\Omega_2\>=0, ~~~~~\<\Omega,\Omega\>=\<\Omega_1,\Omega_1\>-\<\Omega_2,\Omega_2\>=a=0 \hbox{ or } 1.\]
\noindent
Since in either case $\<\Omega_1,\Omega_2\>=0$, by \eqref{eq-Omega} and \eqref{eq-Omega-h} we have
\begin{equation}\label{eq-Omega-1}\Delta^{\perp}\Omega_1=-8e^{-4\rho}|\Omega_2|^2\Omega_1, \,~~~ \hspace{3mm}\Delta^{\perp}\Omega_2=-8e^{-4\rho}|\Omega_1|^2\Omega_2,
\end{equation}
and
\begin{equation}\label{eq-Omega-A}
\tilde{\mathcal{A}}(\Omega_j)=8e^{-4\rho}|\Omega_j|^2\Omega_j,~ j=1,2.\end{equation}
So by  \eqref{eq-L}  we obtain
\begin{equation}\label{eq-Omega-L}
\left\{\begin{split}
\mathcal{L}(\Omega_1)&=2\left(1+4e^{-4\rho}(|\Omega_1|^2-|\Omega_2|^2)\right)\Omega_1,\\
\mathcal{L}(\Omega_2)&=2\left(1-4e^{-4\rho}(|\Omega_1|^2-|\Omega_2|^2)\right)\Omega_2.\\
\end{split}\right.
\end{equation}

\subsection{The case with nonvanishing Hopf differential  $\mathcal{H}$}

\subsubsection{The index estimate}
\begin{theorem}\label{prop-H-no} Let  $f:T^2\rightarrow S^n$  be a minimal torus with $\mathcal{H}\neq0$. Then
\begin{enumerate}\item
$\Ind(f)\geq n+2$.
\item
If $n=4$, then $\Ind(f)\geq 6$ and equality holds if and only if $f$ is congruent to the Clifford torus in some great $S^3\subset S^n$.
 \item
If $f$ is full with $n>4$ and $K\equiv0$, then $\Ind(f)\geq n+3$.
 \end{enumerate}
\end{theorem}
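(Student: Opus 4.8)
The plan is to run one mechanism for all three parts: combine the $(n+1)$-dimensional family of conformal Jacobi fields from Lemma \ref{lemma-simons} with test sections manufactured from the Hopf data $\Omega$. (Since $A''(0)=I(V,V)=-Q(V,V)$ with $Q(V,V)=\int_M\<\mathcal L V,V\>\,\dd M$, a subspace on which $Q$ is positive definite is one on which $I$ is negative definite, so $Ind(f)$ is at least its dimension.) Normalize the Hopf differential so that $\<\Omega,\Omega\>\equiv 1$, i.e.\ $\<\Omega_1,\Omega_2\>\equiv 0$ and $|\Omega_1|^2-|\Omega_2|^2\equiv 1$; in particular $\Omega_1$ is nowhere zero. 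Put $W:=Span_{\mathbb R}\{Z^\perp:Z\in\mathbb R^{n+1}\}$, so $\dim W=n+1$ and $\mathcal L$ acts on $W$ as $2\cdot\mathrm{id}$ (Lemma \ref{lemma-simons}), whence $Q(w,w')=2\<w,w'\>_{L^2}$. For part (1), \eqref{eq-Omega-L} gives $\mathcal L(\Omega_1)=2(1+4e^{-4\rho})\Omega_1$ and $Q(w,\Omega_1)=2\<w,\Omega_1\>_{L^2}$, so for $w\in W$, $t\in\mathbb R$,
\[Q(w+t\Omega_1,\,w+t\Omega_1)=2\|w+t\Omega_1\|_{L^2}^2+8t^2\int_M e^{-4\rho}|\Omega_1|^2\,\dd M .\]
The right-hand side is $>0$ for all $(w,t)\neq(0,0)$; since it equals $Q(w+t\Omega_1,w+t\Omega_1)$, which vanishes when $w+t\Omega_1=0$, we get $\Omega_1\notin W$ and $Q$ positive definite on the $(n+2)$-dimensional space $W\oplus\mathbb R\Omega_1$, so $Ind(f)\ge n+2$.

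For part (2), $n=4$ makes part (1) read $Ind(f)\ge 6$. The Clifford torus $T_0$ in a great $S^3\subset S^4$ achieves equality: $Ind_{S^3}(T_0)=5$, while the remaining normal direction $e\perp S^3$ is parallel and flat with $\tilde{\mathcal A}(e)\equiv 0$, so $\mathcal L(g e)=(\Delta g+2g)e$, contributing exactly one further positive eigenvalue (the constant mode, since the first nonzero Laplace eigenvalue of $T_0$ is $2$), giving $Ind(T_0)=6$. For the converse: assume $Ind(f)=6$, so $W\oplus\mathbb R\Omega_1$ is a maximal positive subspace. Since $\<\Omega,\Omega\>\neq 0$ the normal bundle is trivial and carries an almost complex structure $J$; one adjoins $\Omega_2$ (and, if needed, $J\Omega_1$). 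Using $\<\Omega_1,\Omega_2\>\equiv 0$ and \eqref{eq-Omega-L}, the same computation gives, on $V=w+s\Omega_1+t\Omega_2$,
\[Q(V,V)=2\|V\|_{L^2}^2+8s^2\int_M e^{-4\rho}|\Omega_1|^2\,\dd M-8t^2\int_M e^{-4\rho}|\Omega_2|^2\,\dd M .\]
If $\Omega_2\notin W\oplus\mathbb R\Omega_1$, then $Ind(f)=6$ forces this $7$-form not to be positive definite, which (taking $w=0$, $s=0$, and then using the full freedom) yields $\int_M(4e^{-4\rho}-1)|\Omega_2|^2\,\dd M\ge 0$. Rewriting $4e^{-4\rho}-1=-(2|\Omega_2|^2+K)/(2|\Omega_2|^2+1)$ by the Gauss equation, applying Gauss--Bonnet $\int_{T^2}K=0$, and invoking the Bochner identity $\int_{T^2}|\nabla^\perp\Omega_2|^2=8\int_{T^2}e^{-4\rho}|\Omega_1|^2|\Omega_2|^2$ from \eqref{eq-Omega-1}, I expect to conclude $\Omega_2\equiv 0$. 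Then $\Omega$ is real, hence parallel by Codazzi ($\nabla^\perp_{\bar z}\Omega=0$), so $f(T^2)$ lies in a totally geodesic $S^3$, where $Ind_{S^3}(f)\le Ind(f)-1=5$; by Urbano's theorem \cite{Urbano}, $f$ is the Clifford torus.

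For part (3), assume $n>4$, $K\equiv 0$, and $\Omega_2\not\equiv 0$ (equivalently $f$ is not the Clifford torus in some $S^3$, a case that must be excluded since there $Ind(f)=n+2$). Flatness forces $\rho$ to be harmonic on $T^2$, hence constant, and then the Gauss equation forces $|\Omega|^2=(4e^{-4\rho})^{-1}$ to be constant; with $|\Omega_1|^2-|\Omega_2|^2\equiv 1$ this makes $|\Omega_1|^2$ and $|\Omega_2|^2$ constant and $0<4e^{-4\rho}<1$. Then \eqref{eq-Omega-L} shows $\Omega_1$ and $\Omega_2$ are genuine $\mathcal L$-eigenfields with eigenvalues $2(1+4e^{-4\rho})$ and $2(1-4e^{-4\rho})$, both positive and both $\neq 2$. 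As eigenspaces of the self-adjoint $\mathcal L$ for distinct eigenvalues are $L^2$-orthogonal, $W\oplus\mathbb R\Omega_1\oplus\mathbb R\Omega_2$ is $(n+3)$-dimensional with $Q$ positive definite on it, so $Ind(f)\ge n+3$.

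The step I expect to be the main obstacle is the rigidity half of part (2): converting the bare count $Ind(f)=6$ into the pointwise vanishing $\Omega_2\equiv 0$. Parts (1) and (3) only use the cheap fact that distinct eigenvalues give orthogonal (hence independent) destabilizing directions, whereas part (2) must also rule out that $\Omega_2$ (or $J\Omega_1$) is an $L^2$-combination of the conformal fields and, where it is not, squeeze a contradiction out of the explicit form of $Q$ combined with the Gauss equation, Gauss--Bonnet, and the Bochner identity for $|\Omega_2|^2$; one must also separately verify that the index of the Clifford torus in $S^3\subset S^4$ is exactly $6$.
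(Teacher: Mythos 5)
Your parts (1) and (3) are essentially the paper's own arguments and are fine: (1) is the Rayleigh-quotient observation that $\Omega_1$ pairs with eigenvalue strictly below that of the $(n+1)$ conformal fields, and (3) uses $K=0$ to make $\rho$, $|\Omega_1|$, $|\Omega_2|$ constant so that $\Omega_1,\Omega_2$ become genuine eigensections with distinct positive Rayleigh quotients $2(1\pm 4e^{-4\rho})\neq 2$ (you derive constancy directly rather than citing the homogeneity classification of Bryant--Kenmotsu, and you correctly note that (3) needs fullness, since the Clifford torus in $S^3\subset S^n$ is flat with index only $n+2$). The genuine gap is in the rigidity half of (2), and you have located it yourself. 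Two things go wrong. First, failure of positive definiteness of $Q$ on the $7$-dimensional space $W\oplus\mathbb R\Omega_1\oplus\mathbb R\Omega_2$ does \emph{not} yield $\int(4e^{-4\rho}-1)|\Omega_2|^2\,\dd M\ge 0$: minimizing $2\|w+s\Omega_1+t\Omega_2\|^2$ over $w,s$ replaces $\|\Omega_2\|^2$ by the norm of the projection of $\Omega_2$ off $W\oplus\mathbb R\Omega_1$, so you only get $4\int e^{-4\rho}|\Omega_2|^2\ge\|P\Omega_2\|_{L^2}^2$, which is strictly weaker (and the case $\Omega_2\in W\oplus\mathbb R\Omega_1$ is not addressed at all). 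Second, and more seriously, the sign statement you ultimately need --- that $\int(1-4e^{-4\rho})|\Omega_2|^2>0$ unless $\Omega_2\equiv 0$ --- cannot be extracted from Gauss--Bonnet plus the Bochner identity in any evident way, because $K$ appears against the non-constant weight $|\Omega_2|^2/(1+2|\Omega_2|^2)$; an inequality of exactly this type is what the authors state as an \emph{open conjecture} (their \eqref{eq-stable}) in the Remark following the theorem. So your route for (2) reduces the theorem to something the authors themselves could not prove.

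The paper avoids $\Omega_2$ entirely in part (2). For $f$ full in $S^4$ it writes $\Omega_1=\cosh\theta\,\psi_3$, $\Omega_2=\sinh\theta\,\psi_4$ (possible since $|\Omega_1|\ge 1$), defines the parallel almost complex structure $J$ on the rank-$2$ normal bundle by $J\psi_3=\psi_4$, $J\psi_4=-\psi_3$, and checks that $\mathcal L(J\Omega_1)=2J\Omega_1$ and $\mathcal L(J\Omega_2)=2J\Omega_2$: the rotation by $J$ flips which of $|\Omega_1|^2,|\Omega_2|^2$ appears in $\Delta^\perp$ and in $\tilde{\mathcal A}$, so the curvature terms cancel exactly and one lands in the \emph{same} eigenvalue as the conformal fields. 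Differentiating a putative relation $J\Omega_j+E^\perp=0$ via \eqref{eq-ENz} shows $\theta_{zz}+2\rho_z\theta_z$ would have to be simultaneously real and purely imaginary, and the degenerate case forces $f$ into a great $S^3$; hence at least one of $J\Omega_1,J\Omega_2$ enlarges the $\lambda=-2$ eigenspace to dimension $6$, which together with $\Omega_1$ gives $Ind(f)\ge 7$ for full tori. The non-full case then reduces to Urbano. If you want to complete your proof, this construction of exact eigensections via $J$ is the missing idea; the quadratic-form bookkeeping alone will not close part (2).
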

\begin{proof}
Since $\mathcal{H}\neq0$, we can assume that $\<\Omega,\Omega\>=|\Omega_1|^2-|\Omega_2|^2=1$ as discussed above.
From equation \eqref{eq-Omega-L} we have $\mathcal{L}(\Omega_1)=2(1+4e^{-4\rho})\Omega_1$, yielding the inequality
\footnote{Note that for a surface $M$ of genus greater than 1, $\Omega$ is not globally defined on $M$. 
Nevertheless it may be interesting to multiply $\Omega$ by a cut-off function (as in \cite{Doris}) to estimate the first eigenvalue of the Jacobi operator.}
\[-\int_{T^2}\<\Omega_1,\mathcal{L}(\Omega_1)\>\dd A=-2\int_{T^2}(1+4e^{-4\rho})|\Omega_1|^2 \dd A<-2\int_{T^2}|\Omega_1|^2 \dd A.\]
So the first eigenvalue $\alpha_1$ of $\mathcal{L}$ satisfies $\alpha_1<-2$, and $\Ind(f)\geq n+1+1=n+2$, giving  (1).

Now consider item (2): if $f$ lies in some great $S^3$, then (2) follows from Urbano's theorem \cite{Urbano}.
 Thus we  may assume  $f$ is full in $S^4$.  We will show by construction of test eigensections that the eigenspace belonging to $\alpha=-2$ has dimension {\em at least} $6$. Since $\alpha_1<-2$, we have $\Ind(f)\geq7.$

First, since the normal bundle $\NM$ is orientable, and since $|\Omega_1|\geq 1$, there  {exist} some function $\theta$ on $T^2$, and global smooth unit sections $\psi_3$ and $\psi_4$ such that $\Omega_1=\cosh \theta \psi_3$ and
$\Omega_2=\sinh \theta \psi_4$. Since the Calabi differential $\CC=\Omega\dd z^2$ is holomorphic, that is, $\nabla^{\perp}_{\bar z}\Omega=0$, we have
 {$\nabla^{\perp}_{z}\psi_3=i\theta_z\psi_4.$}
We define a smooth almost complex structure $\J$ on the normal bundle $\NM$ of $f$ such that
 {$
\J\psi_3=\psi_4,\ \J\psi_4=-\psi_3.
$}
It is straightforward to show that  {$\nabla^{\perp} \J\equiv0.$}
As a consequence, by \eqref{eq-Omega-1} and \eqref{eq-Omega-A} we obtain
\[\mathcal{L}(\J\Omega_1)= 2\J\Omega_1,\quad \mathcal{L}(\J\Omega_2)= 2\J\Omega_2.\]
We claim that at least one of $\{\J\Omega_1,e_0^\perp,\cdots, e_4^\perp\}$ and $\{\J\Omega_2,e_0^\perp,\cdots, e_4^\perp\}$ is linearly independent.
As a consequence, $\Ind(f)\geq 1+6=7$ holds. We will prove this claim by contradiction.

Suppose  {that}  both $\{\J\Omega_1,e_0^\perp,\cdots, e_4^\perp\}$ and $\{\J\Omega_2,e_0^\perp,\cdots, e_4^\perp\}$ are linearly {\em dependent}.  Since
$\{e_0^\perp,\cdots, e_4^\perp\}$ is linearly independent, $\J\Omega_1+E^\perp=0$ for some $E\in\mathbb R^5$. Then by \eqref{eq-ENz} we have
\[\nabla^{\perp}_{z}(\J\Omega_1)-2e^{-2\rho}\<\J\Omega_1,\Omega\>f_{\bar z}-2e^{-2\rho}\<E,\Omega\>f_{\bar z}-2e^{-2\rho}\<E,f_{\bar z}\>\Omega=0.\]
Since $\nabla^{\perp}_{z}(\J\Omega_1)=-i\theta_z\Omega$, from the above equation we obtain
\[i\theta_{z}=-2e^{-2\rho}\<E,f_{\bar z}\>.\]
Taking one more derivative on both sides, by use of \eqref{eq-moving}, we obtain
\begin{equation}\label{eq-Ef}
 i(\theta_{zz}+2\rho_z\theta_z)=\<E,f\>.
\end{equation}
In particular, $\theta_{zz}+2\rho_z\theta_z$ is pure imaginary.

On the other hand, we also have that $\J\Omega_2+\hat E^\perp=0$ for some $\hat E\in\mathbb R^5$. Then again by \eqref{eq-ENz} we have
\[\nabla^{\perp}_{z}(\J\Omega_2)-2e^{-2\rho}\<\J\Omega_2,\Omega\>f_{\bar z}-2e^{-2\rho}\<\hat E,\Omega\>f_{\bar z}-2e^{-2\rho}\<\hat E,f_{\bar z}\>\Omega=0.\]
Since $\nabla^{\perp}_{z}(\J\Omega_2)=\theta_z\Omega$, from the above equation we obtain
\[\theta_{z}=2e^{-2\rho}\<\hat{E},f_{\bar z}\>,\hbox{ and hence } \theta_{zz}+2\rho_z\theta_z=-\<\hat{E},f\>.\]
In particular, $\theta_{zz}+2\rho_z\theta_z$ is real.  Therefore $\theta_{zz}+2\rho_z\theta_z\equiv0$, and we see that $\<E,f\>\equiv0$  by \eqref{eq-Ef}. Hence \[\<E,f_z\>=\<E,f_{zz}\>=\<E,\Omega\>=0.\] So $\<\J\Omega_1,\Omega\>=-\<E,\Omega\>=0$.  As a consequence $\sinh\theta\equiv0$ and hence $\theta_z\equiv0$. So we see that $f$ is in the space spanned by
$\{f,f_z,f_{\bar z}, \psi_3\}|_{z=0}$, that is, $f$ is not full, which is a contradiction.

Finally consider item (3).  In this case $f$ is homogeneous by \cite{Bryant2,Kenmotsu}. Thus $\rho$, $|\Omega_1|$ and $|\Omega_2|$ are all constant. Since $f$ is full in $S^n$  with $n>3$, we have $|\Omega_2|\neq0$; otherwise, we will have $\Omega_2\equiv0$ and hence $\nabla^{\perp}_z\Omega_1\equiv0$ by the Codazzi equation. Then $f$ is contained in some $S^3=S^n\cap  {\Span}\{f,\text{Re} f_z, \text{Im} f_{z}, \Omega_1\}|_{z=0}$, a contradiction.

By \eqref{eq-integrability} and \eqref{eq-Omega-L}, we obtain
\[\mathcal{L}(\Omega_1)=2(1+4e^{-4\rho})\Omega_1,\ \mathcal{L}(\Omega_2)=2(1-4e^{-4\rho})\Omega_2.\]
So $\Omega_1$ is an eigensection belonging to the eigenvalue $-2(1+4e^{-4\rho})<-2$ and $\Omega_2$ is an eigensection belonging to the eigenvalue $-2(1-4e^{-4\rho})$.
Since
 {\[|\Omega|^2=|\Omega_1|^2+|\Omega_2|^2>|\Omega_1|^2-|\Omega_2|^2=1,\]} by the Gauss equation \eqref{eq-integrability} we obtain   $0=K=1-4e^{-4\rho}|\Omega|^2<1-4e^{-4\rho}$. So we have $-2(1-4e^{-4\rho})\in(-2,0)$.
Consequently $\Ind(f)\geq (n+1)+2$.
\end{proof}

From the above proof we see that if $f$ is homogeneous, then either it is contained in some great $S^3\subset S^n$, or its Jacobi operator has an eigenvalue in the interval $(-2,0)$.
Recall\footnote{See \cite{Weiner} for $S^3$ or  {\cite[Theorem 3.17]{NS2}} for $S^n$. Note that in \cite{NS2} the definition of Willmore stability is stronger than the one used here. In \cite{KW1} we used an idea from \cite{Kusner} to give a proof simpler than the one in \cite{NS2}.} %\blue{}
that for a minimal surface in $S^n$, it is Willmore unstable if its Jacobi operator has an eigenvalue in the interval $(-2,0)$.
Since the Clifford torus is the only homogeneous minimal torus in $S^3$ (see \cite{CDK,Lawson} for a proof and \cite{Brendle,Brendle2,CDK,Lawson,Urbano} for related results), and since it is Willmore stable in  $S^n$  {(see \cite{NS2,Weiner}),} we obtain:
\begin{corollary}
Let  $f:T^2\rightarrow S^n$ be a homogeneous minimal torus with $\mathcal{H}\neq0$. Then it is Willmore stable if and only if it is congruent to the Clifford torus in some great $S^3\subset S^n$.
\end{corollary}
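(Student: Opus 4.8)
The approach is to turn the remark immediately preceding the statement — which is itself read off from the proof of Theorem \ref{prop-H-no}(3) — into a clean argument, supplying only the Willmore stability of the Clifford torus and the standard dictionary between the area Jacobi spectrum and the second variation of the Willmore energy.

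One direction is essentially free. If $f$ is congruent to the Clifford torus in a great $S^3\subset S^n$, then $f$ is Willmore stable: this is the classical stability of the Clifford torus, which follows from the characterization of Willmore (in)stability of a closed minimal surface in $S^n$ in terms of its Jacobi operator (see \cite{NS2,KW1}) together with Urbano's determination of the low-lying Jacobi spectrum of the Clifford torus — all of whose negative directions other than the $(-2)$-eigenspace spanned by the Möbius fields $Z^\perp$ of Lemma \ref{lemma-simons} have eigenvalue strictly below $-2$ and hence do not decrease the Willmore energy.

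For the other direction, let $f:T^2\to S^n$ be homogeneous with $\mathcal H\neq0$. By \cite{Bryant2,Kenmotsu} the torus is flat, $K\equiv0$, and $\rho$, $|\Omega_1|^2$, $|\Omega_2|^2$ are all constant; normalize the coordinate $z$ so that $\langle\Omega,\Omega\rangle=|\Omega_1|^2-|\Omega_2|^2=1$. If $\Omega_2=0$, then $\nabla^\perp\Omega_1=0$, so by \eqref{eq-moving} every derivative of $f$ stays inside the fixed $4$-dimensional subspace $\mathrm{span}_{\mathbb R}\{f,f_z,f_{\bar z},\Omega_1\}|_{z=0}$ of $\mathbb R^{n+1}$; hence $f$ maps into a great $S^3$, and there the Clifford torus is the unique homogeneous minimal torus. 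If instead $\Omega_2\neq0$, then $|\Omega|^2=|\Omega_1|^2+|\Omega_2|^2>1$, so the Gauss equation \eqref{eq-integrability} gives $4e^{-4\rho}=|\Omega|^{-2}<1$; then, exactly as in the proof of Theorem \ref{prop-H-no}(3), \eqref{eq-Omega-L} shows that $\Omega_2$ is a Jacobi eigensection with eigenvalue $-2(1-4e^{-4\rho})\in(-2,0)$, and by \cite{NS2,KW1} this makes $f$ Willmore unstable. Thus a Willmore stable homogeneous minimal torus with $\mathcal H\neq0$ must have $\Omega_2=0$, i.e. be congruent to the Clifford torus in a great $S^3$, which is the desired equivalence.

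The one ingredient doing genuine work beyond the eigenvalue bookkeeping already in the paper is the link between area and Willmore, which I would quote from \cite{NS2,KW1}: a closed minimal surface in $S^n$ is Willmore stable if and only if its Jacobi operator has no eigenvalue in the open interval $(-2,0)$, the $(-2)$-eigenspace being precisely the infinitesimal Möbius directions of Lemma \ref{lemma-simons}. With that in hand, both the instability when $\Omega_2\neq0$ and the stability of the Clifford torus follow at once from the displayed spectra, so the only potential subtlety is making sure this criterion is stated in a form (and sign convention) consistent with \eqref{eq-Omega-L}.
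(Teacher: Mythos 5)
Your proposal is correct and follows essentially the same route as the paper, which reads this corollary off from the eigenvalue computation in the proof of Theorem \ref{prop-H-no}(3): homogeneity forces $\rho$, $|\Omega_1|$, $|\Omega_2|$ to be constant, and either $\Omega_2=0$ (so $f$ lies in a great $S^3$, where the Clifford torus is the only homogeneous minimal torus) or $\Omega_2$ is a Jacobi eigensection with eigenvalue $-2(1-4e^{-4\rho})\in(-2,0)$, whence Willmore instability via \cite{NS2,KW1}. The only step you elaborate beyond the paper is the converse (Willmore stability of the Clifford torus); note that for $n>3$ this also requires checking the $n-3$ parallel normal directions coming from $S^3\subset S^n$, where the absence of Jacobi eigenvalues in $(-2,0)$ comes down to the first Laplace eigenvalue of the Clifford torus being $2$, and not only to Urbano's computation for the ambient $S^3$.
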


\subsection{The superminimal case of vanishing Hopf differential $\mathcal{H}$}
We have two subcases: the general case, and the case in $S^4$.
\begin{proposition} \label{prop-H-va-sn}Let   $f:T^2\rightarrow S^n$ for $n>4$ be a  full minimal torus with $\mathcal{H}\equiv0$. Then $\Ind(f)\geq n+3$.
\end{proposition}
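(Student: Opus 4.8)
The plan is to produce an $(n+3)$-dimensional family of Jacobi fields all lying in the eigenspace of the second variation belonging to the eigenvalue $-2$. Since $\mathcal H\equiv0$ forces $\<\Omega,\Omega\>=0$, we have $|\Omega_1|^2=|\Omega_2|^2$ and $\<\Omega_1,\Omega_2\>=0$; moreover a torus is not totally geodesic in $S^n$, so $\Omega\not\equiv0$ and hence neither $\Omega_1$ nor $\Omega_2$ vanishes identically. Then \eqref{eq-Omega-L} gives $\mathcal L(\Omega_1)=2\Omega_1$ and $\mathcal L(\Omega_2)=2\Omega_2$, so $\Omega_1$ and $\Omega_2$ are Jacobi eigensections sharing the eigenvalue $-2$ with the conformal fields $Z^\perp$ of Lemma~\ref{lemma-simons}. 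Combining with that lemma (and the fact that $f$ is not a round $2$-sphere), the bound $Ind(f)\ge n+3$ will follow once we show $\{\Omega_1,\Omega_2,e_0^\perp,\dots,e_n^\perp\}$ is linearly independent, i.e.\ that $\alpha_1\Omega_1+\alpha_2\Omega_2=E^\perp$ with $\alpha_i\in\mathbb R$, $E\in\mathbb R^{n+1}$ implies $\alpha_1=\alpha_2=0$.

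To prove this implication, assume $(\alpha_1,\alpha_2)\neq0$. Applying the third relation in \eqref{eq-moving} to $\psi=\alpha_1\Omega_1+\alpha_2\Omega_2$ and \eqref{eq-ENz} to $E^\perp$, and comparing normal components, gives
\[
\alpha_1\nabla^{\perp}_z\Omega_1+\alpha_2\nabla^{\perp}_z\Omega_2=-2e^{-2\rho}\<E,f_{\bar z}\>\,\Omega .
\]
On the other hand, conjugating Codazzi's equation $\nabla^{\perp}_{\bar z}\Omega=0$ yields $\nabla^{\perp}_z\bar\Omega=0$, that is $\nabla^{\perp}_z\Omega_1=i\,\nabla^{\perp}_z\Omega_2$, whence $\nabla^{\perp}_z\Omega_1=\tfrac12\nabla^{\perp}_z\Omega$ and $\nabla^{\perp}_z\Omega_2=-\tfrac{i}{2}\nabla^{\perp}_z\Omega$. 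Substituting and dividing by $\tfrac12(\alpha_1-i\alpha_2)\neq0$ gives $\nabla^{\perp}_z\Omega=\mu\,\Omega$ for some complex-valued function $\mu$; together with $\nabla^{\perp}_{\bar z}\Omega=0$, this means the rank-$\le2$ real subbundle $\mathcal E:=\mathrm{span}_{\mathbb R}\{\Omega_1,\Omega_2\}\subset\NM$, which carries the entire second fundamental form of $f$, is parallel.

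Now a standard reduction-of-codimension argument finishes it: the subspace $W:=\mathrm{span}_{\mathbb R}\{f,f_z,f_{\bar z},\Omega_1,\Omega_2\}$ of $\mathbb R^{n+1}$ is invariant under $\partial_z$ and $\partial_{\bar z}$, since \eqref{eq-moving} keeps $f_{zz}$ and $f_{z\bar z}$ inside $\mathrm{span}_{\mathbb R}\{f,f_z,f_{\bar z}\}+\mathcal E$, while the two relations just obtained, inserted into the structure equations for $(\Omega_j)_z$ and $(\Omega_j)_{\bar z}$, keep $\partial_z\Omega_j$ and $\partial_{\bar z}\Omega_j$ inside $\mathrm{span}_{\mathbb R}\{f,f_z,f_{\bar z}\}+\mathcal E$ as well. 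Hence $W$ is a fixed subspace and $f(T^2)\subset W\cap S^n$; since $\dim_{\mathbb R}W\le5$ and $f$ is full in $S^n$, this forces $n+1\le5$, contradicting $n>4$. Therefore $\alpha_1=\alpha_2=0$, the $n+3$ sections $\Omega_1,\Omega_2,e_0^\perp,\dots,e_n^\perp$ are independent, and $Ind(f)\ge n+3$.

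The only point needing care is this last step: because $\Omega_1,\Omega_2$ may have isolated zeros, $\dim_{\mathbb R}W$ could a priori drop there, so one first runs the argument on the dense open set where $\Omega_1,\Omega_2\neq0$ to conclude that $W$ is locally constant, and then invokes the real-analyticity of the minimal immersion $f$ to deduce that all of $f(T^2)$ lies in this single $\le5$-dimensional subspace. Everything else is a direct substitution into \eqref{eq-moving}, \eqref{eq-integrability} and \eqref{eq-ENz}.
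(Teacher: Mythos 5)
Your proposal is correct and follows essentially the same route as the paper: both take $\Omega_1,\Omega_2$ as eigensections with eigenvalue $-2$ alongside the $n+1$ conformal fields of Lemma~\ref{lemma-simons}, differentiate a hypothetical linear relation, extract the normal component to conclude $\nabla^{\perp}_z\Omega\parallel\Omega$, and then contradict fullness via reduction of codimension to a great $S^4$. The only difference is that you spell out the reduction-of-codimension step (and the isolated-zeros caveat) which the paper dispatches with ``it is easy to see by the structure equation.''
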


\begin{proof} From \eqref{eq-Omega-L} we have
\[\mathcal{L}(\Omega_1)=2\Omega_1,\ \mathcal{L}(\Omega_2)=2\Omega_2.\]
So it suffices to prove that the dimension of
$\Span_{\mathbb R}\{\Omega_1,\Omega_2,e_0^\perp,\cdots, e_n^\perp\}$
is $n+3$. Assume that there exist some real constants $\hat b_1,\hat b_2, c_0,\cdots,c_n$ such that $E=\sum_{j=0}^{n}c_je_j$ and
\[\hat b_1\Omega_1+\hat b_2\Omega_2+E^\perp=c\Omega+\bar c\bar\Omega+E^\perp=0, \hbox{ with } c=\frac{1}{2}(\hat b_1-i\hat b_2).\]
 Taking derivatives and using \eqref{eq-ENz}, we obtain
\[\begin{split}0& =c\nabla^{\perp}_z\Omega-2\bar ce^{-2\rho}\<\bar\Omega,\Omega\>f_{\bar z} -2e^{-2\rho}\<E,\Omega\>f_{\bar z}-2e^{-2\rho}\<E,f_{\bar z}\>\Omega.\\
\end{split}\]
Therefore
\begin{equation}\label{eq-E-Omega}
c\nabla^{\perp}_z\Omega-2e^{-2\rho}\<E,f_{\bar z}\>\Omega=0,\ \quad  \bar c \<\bar\Omega,\Omega\>+ \<E,\Omega\>=0.\end{equation}
So either $c=0$, or $\nabla^{\perp}_z\Omega\parallel \Omega$. In the latter case, it is easy to see  by the structure equation  \eqref{eq-moving} that $f$ is now contained in some great $S^4$ contained in   {\[S^n\cap Span\{f,\text{Re} f_z, \text{Im} f_{z}, \text{Re} \Omega, \text{Im}\Omega\}|_{z=0}.\]} Hence $c=0$ and  $E=0$.
\end{proof}

The $S^4$ case is a bit more subtle. It is easy to show that there exists no $E\in \mathbb C^5$ such that $E^\perp=\bar \Omega$.  While it seems impossible to get a better estimate of the index by the above method, the work of Ejiri \cite{Ejiri1983} can be modified to show  {the following proposition.}

\begin{proposition} \label{prop-H-va-s4} Let   $f:T^2\rightarrow S^4$  be a  minimal torus with $\mathcal{H}\equiv0$. Then $\Ind(f)\geq 12$.
\end{proposition}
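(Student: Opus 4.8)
The plan is to reduce the computation of $Ind(f)$ to a spectral problem for a $\bar\partial$-type operator on a line bundle over $T^2$, and then to produce enough holomorphic sections of that bundle by hand. First one checks that $f$ must be full in $S^4$: otherwise it lies in a great $S^3$, but then $\mathcal H\equiv0$ forces $\Omega\equiv0$ in the rank-one normal bundle, so $f$ is totally geodesic, contradicting $g(T^2)=1$. So assume $f$ is full; then the oriented rank-two normal bundle $\NM$ carries the rotation-by-$\pi/2$ complex structure $J$, which is automatically parallel for $\nabla^\perp$. Since $\mathcal H\equiv0$ gives $|\Omega_1|=|\Omega_2|$ and $\<\Omega_1,\Omega_2\>=0$, the Ricci data in \eqref{eq-integrability} show $\tilde{\mathcal A}=\tfrac S2\,\mathrm{Id}_{\NM}$, so $\mathcal L=\Delta^\perp+2+\tfrac S2$ commutes with $J$. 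Writing $\NM\otimes\mathbb C=\mathcal N\oplus\bar{\mathcal N}$ for the $\pm i$-eigenbundles, $\mathcal L$ preserves the Hermitian line bundle $\mathcal N$, and since $\mathcal L$ is a real operator each of its real eigenspaces is the realification of the corresponding eigenspace of the induced operator $\mathcal L_{\mathcal N}$ on $\Gamma(\mathcal N)$; hence $Ind(f)=2\cdot\mathrm{ind}_{\mathbb C}(\mathcal L_{\mathcal N})$.

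The key point in the superminimal case is a curvature identity. Feeding the (Codazzi-)holomorphic section $\Omega$ into the Ricci equation \eqref{eq-integrability} gives $R^\perp_{z\bar z}=-\tfrac{e^{2\rho}S}{4}$ on $\mathcal N$ off the isolated zeros of $\Omega$, hence everywhere. Combining this with \eqref{eq-laplacian-def} collapses $\mathcal L_{\mathcal N}$ to
\[ \mathcal L_{\mathcal N}=4e^{-2\rho}\nabla^\perp_z\nabla^\perp_{\bar z}+2=2\bigl(\mathrm{Id}-\bar\partial^*_{\mathcal N}\bar\partial_{\mathcal N}\bigr), \]
where $\bar\partial_{\mathcal N}$ is the Koszul--Malgrange operator determined by $\nabla^\perp$. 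Thus every eigenvalue of $\mathcal L_{\mathcal N}$ is $\le2$, its $2$-eigenspace is exactly $H^0(T^2,\mathcal N)$, and
\[ Ind(f)=2\cdot\#\{\text{eigenvalues of }\bar\partial^*_{\mathcal N}\bar\partial_{\mathcal N}\text{ in }[0,1)\}\ \ge\ 2\dim_{\mathbb C}H^0(T^2,\mathcal N). \]
So it suffices to exhibit $6=n+2$ linearly independent holomorphic sections of $\mathcal N$.

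These come from the conformal vector fields and the Hopf differential. For $Z\in\mathbb C^5$ let $\sigma_Z\in\Gamma(\mathcal N)$ be the $\mathcal N$-component of $Z^\perp$; by the complexification of Lemma~\ref{lemma-simons} we have $\mathcal L(Z^\perp)=2Z^\perp$, so $\mathcal L_{\mathcal N}\sigma_Z=2\sigma_Z$ and $\sigma_Z\in H^0(\mathcal N)$. The map $Z\mapsto\sigma_Z$ is injective: if $\sigma_Z=0$ with $Z\neq0$, then $Z^\perp$ is a section of $\bar{\mathcal N}$, and tracking the normal and tangential parts of $(Z^\perp)_z$ via \eqref{eq-ENz} and \eqref{eq-moving} forces $Z$ to be a constant vector which is $\<\,\cdot\,,\,\cdot\,\>$-orthogonal to $f,f_z,f_{\bar z},\Omega,\bar\Omega$; by fullness these span $\mathbb C^5$, so $Z=0$. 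Hence $\dim_{\mathbb C}H^0(T^2,\mathcal N)\ge5$. It remains to see that $\Omega$ (which lies in $H^0(\mathcal N)$ by Codazzi) is not one of the $\sigma_Z$: if $\Omega=\sigma_Z$ then $Z^\perp=\Omega+\mu\bar\Omega$, and computing $(Z^\perp)_z$ in \eqref{eq-ENz} gives $\nabla^\perp_z(\mu\bar\Omega)=0$, forcing $\mu_z=0$; a further use of the structure equations should pin $\mu$ down to a constant, whence $\bar Z^\perp=\bar\Omega$, contradicting the elementary fact noted just before the statement that $\bar\Omega$ is not of the form $E^\perp$. This produces a sixth independent section, so $\dim_{\mathbb C}H^0(T^2,\mathcal N)\ge6$ and $Ind(f)\ge12$.

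The main obstacle is that last step — ruling out $\Omega\in\operatorname{span}\{\sigma_Z\}$, equivalently the rigidity argument forcing the antiholomorphic function $\mu$ above to be constant. This is exactly the place where Ejiri's analysis of minimal $2$-spheres must be adapted to the torus: one must squeeze out of $Z^\perp=\Omega+\mu\bar\Omega$ a pointwise consequence strong enough to kill $\mu-\mathrm{const}$, using only \eqref{eq-moving}, the Codazzi and Ricci equations in \eqref{eq-integrability}, and fullness. A subsidiary point to handle carefully is the sign bookkeeping in $\mathcal L_{\mathcal N}=2(\mathrm{Id}-\bar\partial^*\bar\partial)$ and the (harmless) zeros of $\Omega$, where $\mathcal N$ is still smoothly defined and the canonical bundle of $T^2$ is trivial.
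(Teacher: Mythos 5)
Your strategy is genuinely different from the paper's, and most of it is sound. The paper instead quotes Ejiri's theorem (whose proof only uses $\mathcal H\equiv0$, not genus $0$) to identify the whole $(-2)$-eigenspace with the real space of holomorphic sections of $\NM\otimes\mathbb C$, computes its dimension by Riemann--Roch as $A(T^2)/\pi+2\dim H^1$, and then invokes the twistor correspondence to get $A(T^2)=4\pi\deg(\mathcal F)\geq12\pi$. Your route avoids the twistor degree bound entirely: the reduction to the line bundle $\mathcal N$, the identity $\tilde{\mathcal A}=\tfrac S2\,\mathrm{Id}$, the curvature identity $R_{z\bar z}|_{\mathcal N}=-2e^{-2\rho}\<\Omega,\bar\Omega\>$, the resulting bound $Ind(f)\geq2\dim_{\mathbb C}H^0(T^2,\mathcal N)$, and the injectivity of $Z\mapsto\sigma_Z$ all check out (in effect you are reproving the relevant part of Ejiri's lemma by hand). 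Note your count is necessarily tight: $\dim_{\mathbb C}H^0(\mathcal N)=\deg\mathcal N=A(T^2)/2\pi$, which equals $6$ for the minimal-area superminimal tori, so the six sections you name would have to be \emph{all} of $H^0(\mathcal N)$.

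However, the step you yourself flag as the main obstacle is a genuine gap, and the sketch you give for closing it does not work as written. Writing $Z^\perp=\Omega+\mu\bar\Omega$ and deducing $\mu_z=0$ does not force $\mu$ to be constant: $\mu$ is the ratio of two antiholomorphic sections of $\bar{\mathcal N}$, so it is only defined away from the zeros of $\Omega$ and a priori has poles there; a $\bar z$-meromorphic function on $T^2$ can be a nonconstant elliptic function. (Even granting $\mu$ constant, reducing to ``$\bar Z^\perp=\bar\Omega$'' needs $|\mu|\neq1$ plus some linear algebra.) The gap can be closed, but by a different mechanism: if $\Omega=\sigma_Z$, then taking the $\mathcal N$-component of $(Z^\perp)_z$ in \eqref{eq-ENz} gives $\nabla^{\perp}_z\Omega=g\,\Omega$ with $g=-2e^{-2\rho}\<Z,f_{\bar z}\>$ a globally defined function on $T^2$; the Ricci equation in \eqref{eq-integrability} then yields $g_{\bar z}=-2e^{-2\rho}\<\Omega,\bar\Omega\>$ off the (isolated) zeros of $\Omega$, hence everywhere, and integrating $\tfrac{i}{2}\,\dd z\wedge\dd\bar z$ over the closed torus gives $0=-\tfrac14\int_{T^2}S\,\dd M<0$, a contradiction since a superminimal torus is not totally geodesic. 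With that patch your argument is complete and furnishes an alternative, twistor-free proof of the proposition.
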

\noindent

We defer its proof to Section 4.1, where we also consider the case of higher genus surfaces by adapting Ejiri's work \cite{Ejiri1983} on 2-spheres; there we also consider sharper index estimates one gets for minimal tori with $\mathcal{H}\equiv0$ and for higher genus surfaces, using the method of \cite{MU}.

Note that El Soufi and Ilias \cite{ElSoufi2} show for any minimal torus $f$ with $\mathcal{H}\equiv0$ in $S^4$ that its first Laplacian eigenvalue satisfies $\lambda_1<2$; by \cite{KW1} (or by  {\cite[Proposition 3.13]{Karp})}, if $f$ is composed with the embedding $S^4\subset S^n$ as a great sphere, its index $\Ind(f,n)$ in $S^n$ satisfies
\[\Ind(f,n)\geq \Ind(f,4)+2(n-4)\geq 2n+4>n+3.\]
Thus we conclude   {the following theorem.}
\begin{theorem} \label{thm-H-va}Let $f:T^2\rightarrow S^n$ for $n\ge 4$ be a minimal torus with $\mathcal{H}\equiv0$. Then \[\Ind(f,n)\geq n+3.\]
\end{theorem}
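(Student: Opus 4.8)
The plan is to deduce the theorem from the full-dimensional cases already settled in Proposition~\ref{prop-H-va-sn} (for $n>4$) and Proposition~\ref{prop-H-va-s4} (for $n=4$), by a descent that keeps track of the extra normal directions produced when the ambient sphere is enlarged. Let $S^m\subseteq S^n$ be the smallest great subsphere containing $f(T^2)$, so that $f\colon T^2\to S^m$ is full; since $\Omega$ is the $(2,0)$-part of the second fundamental form, and this form is unchanged by the totally geodesic inclusion $S^m\hookrightarrow S^n$, we still have $\mathcal H\equiv 0$ for $f$ regarded as a map into $S^m$.

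First I would check that $m\ge 4$. A minimal torus cannot lie in $S^2$, since an immersion $T^2\to S^2$ would be an unbranched covering, which is impossible because a covering space of $S^2$ has Euler characteristic $2\deg>0$ while $\chi(T^2)=0$; and it cannot lie in $S^3$ either, because there the normal bundle is a complex line bundle spanned by $\Omega$, so $\mathcal H=\<\Omega,\Omega\>\dd z^4\equiv 0$ forces $\Omega\equiv 0$ and hence, by $S=8e^{-4\rho}|\Omega|^2=0$, that $f$ is totally geodesic --- not a torus. Thus $4\le m\le n$, and Proposition~\ref{prop-H-va-s4} (if $m=4$) or Proposition~\ref{prop-H-va-sn} (if $m>4$) gives $Ind_m(f)\ge m+3$. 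Next I would split the normal bundle of $f$ in $S^n$ as $\perp\!\!T^2\big|_{S^n}=\perp\!\!T^2\big|_{S^m}\oplus\underline{\mathbb R^{n-m}}$, the flat summand being spanned by the constant vectors $e_{m+1},\dots,e_n\in\mathbb R^{n+1}$ orthogonal to $\mathbb R^{m+1}$. Because the second fundamental form of $f$ takes values in the first summand, the normal connection, the normal Laplacian~\eqref{eq-laplacian-def}, the shape operator, and hence the Jacobi operator $\mathcal L$, all preserve this decomposition: on the first summand $\mathcal L$ coincides with the Jacobi operator of $f\colon T^2\to S^m$, and on the constant sections of the second summand $\mathcal L=-2\,\mathrm{Id}$ by Lemma~\ref{lemma-simons}. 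Since the two summands are $L^2$-orthogonal and $\mathcal L$-invariant, the quadratic form $Q$ associated to $f$ in $S^n$ is negative definite on the direct sum of a maximal negative subspace of the first (dimension $Ind_m(f)$) and the $(n-m)$-dimensional space of constant sections of the second, whence
\[Ind_n(f)\ \ge\ Ind_m(f)+(n-m)\ \ge\ (m+3)+(n-m)\ =\ n+3.\]

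The only points requiring care are the non-existence of minimal tori with $\mathcal H\equiv 0$ in $S^2$ and $S^3$, so that the descent bottoms out at $m\ge 4$, and the verification that $\mathcal L$ respects the orthogonal bundle splitting; both are immediate from the structure equations~\eqref{eq-moving}--\eqref{eq-integrability} and Lemma~\ref{lemma-simons}. All the genuinely hard work sits inside Propositions~\ref{prop-H-va-sn} and~\ref{prop-H-va-s4}, which the argument is entitled to invoke; when $m=4$ one in fact obtains the stronger $Ind_n(f)\ge 12+(n-4)$, and the remark preceding the statement sharpens this to $Ind_n(f)\ge 12+2(n-4)$.
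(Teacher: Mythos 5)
Your proof is correct and follows essentially the same route as the paper: reduce to the full cases settled by Propositions \ref{prop-H-va-sn} and \ref{prop-H-va-s4}, then add the unstable directions contributed by the constant normal sections $e_{m+1},\dots,e_n$ spanning the flat factor of the normal bundle. Your version is in fact slightly more complete than the paper's one-line derivation, since you explicitly handle the intermediate case where $f$ is full in some great $S^m$ with $4<m<n$ and check that the descent bottoms out at $m\ge 4$.
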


\begin{remark} \
\begin{enumerate}\item
The equilateral minimal torus in $S^5\subset S^n$ studied by Bryant, Itoh, Montiel and Ros \cite{Bryant3, Itoh, Montiel, KW1}, has vanishing Hopf differential and index $n+3$ \cite{KW1}.  For $n\ge5$, it is the only minimal torus besides the Clifford torus immersed in $S^n$ by first eigenfunctions of the Laplacian \cite{ElSoufi2}.
It is natural to conjecture that it is the only minimal torus --- and perhaps the only orientable minimal surface of genus $g\ge1$ --- in $S^n$ with vanishing Hopf differential and index $n+3$.
\item From \cite{Ejiri1983} it is easily seen that $\Ind(f)>n+3$ for a  {non-totally umbilic} minimal $2$-sphere $f$ in $S^n$, which necessarily \cite{Calabi} has $\mathcal{H}=0$.
\item By \cite{Barbosa,Bryant,Calabi}, minimal tori in $S^4$ with $\mathcal{H}=0$ have area --- and hence Willmore energy --- greater than $8\pi$, so are {\em not} candidates for the Willmore minimizer among tori in $S^4$.
We point out \cite{ElSoufi1, Li-y, Marques, Marques2, Ros} for further discussion of the Willmore conjecture, and we refer to \cite{Mon} for more details on Willmore 2-spheres in $S^4$ with
vanishing $\mathcal{H}$.
\end{enumerate}
\end{remark}

{\section{Remarks on the higher genus case and the nonorientable case}}
In this section we estimate the index of higher genus minimal surfaces in $S^n$, depending on whether or not the Hopf differential $\mathcal{H}$ vanishes.
\subsection{The case with vanishing Hopf differential $\mathcal{H}$}
If we focus on surfaces in $S^4$ for this case,
the arguments of Ejiri in \cite{Ejiri1983} can be modified to give a lower bound for the index.
Here $M$ denotes a closed Riemann surface of genus $g\geq 0$.

Let   $f:M\rightarrow S^4$  be a minimal surface with $\mathcal{H}\equiv0$. The normal bundle of $f$ has a natural almost complex structure $\J$ as shown in Section 3 (we use
$J$ to denote the complex structure on $TM$ for the conformal structure induced by $f$).  Define a Cauchy-Riemann operator $\mathfrak{D}$ on smooth sections of the normal bundle whose kernel consists of the $\J$-holomorphic sections:
\[\mathfrak{D}_XV:=\nabla^{\perp}_{JX}V-\J\nabla^{\perp}_X V,\  {\hbox{ for all }} V\in\Gamma(\NM),\ X\in\Gamma(TM).\]
Then we have the following
\begin{lemma} {\rm ( {\cite[Lemma 3.2]{Ejiri1983}})} In terms of this Cauchy-Riemann operator, the second variation quadratic form on a minimal $f:M\rightarrow S^4$
with $\mathcal{H}\equiv0$ is
\begin{equation}\label{eq-Ejiri}
Q(V,V)=-2\int_M|V|^2\dd A+\frac{1}{2}\int_M|\mathfrak DV|^2\dd A.
\end{equation}
\end{lemma}
This implies immediately  {the following result.}
\begin{theorem} \label{thm-Ejiri} Let   $f:M\rightarrow S^4$  be a minimal immersion of a closed Riemann surface $M$ with $\mathcal{H}\equiv0$. Then the first eigenvalue $\alpha_1$ of the area-Jacobi operator $\mathcal{L}$ satisfies
\begin{equation}\label{eq-Ejiri-2}
  \alpha_1=-2  \,\,\,\,\,\,\, \hbox{with multiplicity} \,\,\,\,\,\,\, m_1=\hbox{$\dim$}_{\mathbb R}\{\hbox{$\J$-holomorphic sections of $\NM$}\}. %\otimes\mathbb C
\end{equation}
\end{theorem}

\begin{theorem} \label{thm-g}  Let   $f:M\rightarrow S^4$  be a minimal immersion of a closed Riemann surface $M$ of genus $g\geq 0$ with $\mathcal{H}\equiv0$. If $f$ is not totally geodesic, then $\Ind(f)\geq 2g+10$.
\end{theorem}

\begin{proof} By Theorem \ref{thm-Ejiri}, we need to estimate $m_1$, adapting Ejiri's argument from the genus $0$ case \cite{Ejiri1983}. We deduce (following  {\cite[pages 131-132]{Ejiri1983}})
\[m_1=2\left(\chi(M,\NM)+\dim H^1(M, \NM)\right),\]%=\frac{A(M)}{\pi}+2\dim H^1(M,\NM).
where $\chi(M,\NM):=-\dim H^1(M,\NM)+\dim H^0(M,\NM)$, and $H^j(M,\NM)$ denotes
the $j$th cohomology group with coefficients in the sheaf $\mathcal{O}(\NM)$ of germs of local $\J$-holomorphic sections of $\NM$.

Let $\widetilde{c}_1$ be the first Chern class of $\NM$ and $c_1$ the first Chern class of $TM$. By the Riemann-Roch theorem, we have
\[\chi(M,\NM)=\widetilde{c}_1([M])+\frac{1}{2}c_1([M])=\frac{1}{2\pi}\int_M(1-K)\dd A+\frac{1}{2}\chi(M)=\frac{A(M)}{2\pi}-\frac{1}{2}\chi(M),\]
where $A(M)$ is the area of $f$. Since $f$ has $\mathcal{H}\equiv0$, it comes via {\em twistor projection} \cite{Calabi, Bryant} into $S^4$ from some horizontal
(anti-)holomorphic curve $\mathcal F$ in $\mathbb{C}P^3$. Thus the area of $f$ (see \cite{Barbosa}) is equal to $4\pi \deg(\mathcal F)\geq 12\pi$ if $f$ is not totally geodesic, and hence we have
\[\Ind(f)\geq m_1\geq 12-\chi(M)=2g+10.\]
\end{proof}
Setting $g=1$ in Theorem \ref{thm-g} gives the proof of Proposition \ref{prop-H-va-s4}.\vspace{2mm}
\begin{remark} Ejiri's estimate \cite{Ejiri1983} for the index of minimal $2$-spheres can be generalized to all minimal surfaces in $S^{2n}$ obtained by twistor projection to $S^{2n}$ of horizontal (anti-)holomorphic curves in the twistor bundle $\mathfrak{J}S^{2n}$ of $S^{2n}$ --- an idea  stemming from Calabi \cite{Calabi} --- but to obtain a better index estimate in high codimension, one would also need to get more precise estimates for the areas of such minimal surfaces when $ g\geq 1$, which lies beyond the scope of this paper.
\end{remark}
We next observe that a better estimate for the index in the higher genus case can be obtained by combining   { \cite[Theorem 4]{MU} and \cite[Lemma 6]{MU}} of Montiel and Urbano.
\begin{theorem} \cite{MU} Let   $f:M\rightarrow S^4$  be a full minimal immersion of a closed Riemann surface $M$ of genus $g\geq 0$ with $\mathcal{H}\equiv0$. Then
\[\Ind(f)=2g-2+\frac{ {\hbox{Area}}(f)}{\pi}\geq 2g+10+4\left[\frac{3g+3}{4}\right]\geq 5g+10.\]\end{theorem}

 \subsection{The case with nonvanishing Hopf differential  $\mathcal{H}$}

In this case,  $\Omega$ is no longer a global section of $\NM\otimes\underline{\mathbb C}$ when $g(M)\geq 2$, nor do we  have $\<\Omega,\Omega\>\equiv1$. So the treatment in Section 3 fails. But in this case, it seems likely that the first eigenvalue $\alpha_1$ of $\mathcal{L}$ still satisfies $\alpha_1<-2$, which would imply that $\Ind(M)\geq 1+n+1=n+2$.
  It therefore naturally leads us to pose the  {following conjecture.}
 \begin{conjecture}
 	Let $f:M\rightarrow S^n$ be an orientable,  {non-totally geodesic}, closed minimal surface. Then $\Ind(M)\geq n+2$,
 	with equality holding if and only if $M=T^2$ and $f$ is congruent to the Clifford torus.
 \end{conjecture}
 \begin{remark}
 	The conjecture holds if $M$ has genus $0$ (by \cite{Ejiri1983}) or genus $1$ (except the equality case when $n>4$ and $\mathcal H\neq0$) by Theorem \ref{prop-H-no} and Theorem \ref{thm-H-va}.
 \end{remark}

\subsection{On nonorientable minimal surfaces}

There has been recent progress estimating the index for immersed nonorientable surfaces.
In \cite{Karp}, the index of minimal real projective planes in $S^n$ is computed. In particular, the Veronese ${\mathbb R}P^2$ in $S^4$ has index $5$.
Then we know by \cite{KW1} or  {\cite[Proposition 3.13]{Karp}} that it has index $n+1$ for all $n\geq4$ if we embed  $S^4$  {totally geodesically} into $S^n$  for $n>4$, %\blue{
since it is immersed by its first eigenfunctions (see also \cite{Karp2}).
In particular, among minimal real projective planes in $S^n$ the following  {index characterization} of Veronese was recently offered by Karpukhin:
\begin{theorem} \cite{Karp}
Let $\Sigma$ be a minimal ${\mathbb R}P^2$ in $S^n$. Then   {$ \Ind(\Sigma)\geq n+1,$}
with equality holding  {if and only if} $\Sigma$ is congruent to the Veronese ${\mathbb R}P^2$ in $S^4$.

Moreover, if $\Sigma$ is not congruent to the Veronese ${\mathbb R}P^2$ in $S^4\subseteq S^n$, we have the following
 \begin{enumerate}
 \item $\Ind(\Sigma)\geq 2n+1$, if $\Sigma$ is fully contained in some $S^4\subset S^n$;
 \item  $\Ind(\Sigma)\geq m(m+2)-3+2(n-2m)=2n+(m+1)(m-3)$, if it is fully contained in $S^{2m}\subset S^n, m\geq3$.
\end{enumerate}
In particular, $\Ind(\Sigma)\geq 2n>n+3$ for any $n\geq 4.$
\end{theorem}

\begin{proof}By  {\cite[Proposition 1.11]{Karp}},
\[\Ind(\Sigma)=\frac{1}{2}\Ind(\tilde \Sigma),\]
where $\tilde\Sigma$ is the orientation double covering of $\Sigma$. By the main Theorem of \cite{Ejiri1983}, \[\Ind(\tilde\Sigma)\geq 2m(m+2)-6\] if $\Sigma$ is fully immersed in $S^{2m}$.

If $m\geq3$, we see that the first eigenvalue of $\Sigma$ is  {smaller than } $2$. So (by   {\cite[Proposition 3.13]{Karp}}, or by \cite{KW1})  \[\Ind(\Sigma)\geq m(m+2)-3+2(n-2m).\]

If $m=2$,  by \cite{MU}, $\Ind(\tilde\Sigma) =4d-2$ in $S^4$, with  $d$ being the degree of the twistor lift of $\tilde\Sigma$. By \cite{Ejiri-E}, $d\geq3$ and $d=3$
if and only if $\Sigma$ is congruent to the Veronese ${\mathbb R}P^2$ in $S^4$.
Then we have (again by  {\cite[Proposition 3.13]{Karp}}  or \cite{KW1})
\[\Ind(\Sigma)\geq 2d-1+n-4\geq n+1\] with equality holding  {if and only if} $d=3$.
Moreover, we have $d$ is odd by \cite{Ga}.  So when $\Sigma$ is not congruent to the Veronese ${\mathbb R}P^2$ in $S^4$, $d\geq 5$ and hence \[\Ind(\Sigma)\geq 10-1+2(n-4)=2n+1.\]
\end{proof}
\begin{remark} Our methods have recently been adapted to the nonorientable case:
\begin{enumerate}
\item
Karpukhin   {et al.} \cite{Karp2}
noted that for particular minimal Klein bottles in $S^4$ the real part of the Calabi differential
$\CC=\Omega\dd z^2=f_{zz}^\perp\dd z^2$ defines a global section of
the (real) normal bundle. They use this to show that the Lawson bipolar minimal Klein bottle $\tilde\tau_{3,1}$ in $S^4$ has index $6$.
Moreover, since $\tilde\tau_{3,1}$  has first Laplacian eigenvalue $\lambda_1=2$, it has index $n+2$ in $S^n$ for all $n\geq4$ if we embed $S^4$  {totally geodesically} into $S^n$ for $n>4$ (see  {\cite[Proposition 3.11]{Karp2}}).

\item It is interesting to compare the above  with  results for orientable minimal surfaces:
    \begin{enumerate}
 \item the  totally geodesic round $2$-sphere in $S^n$ has index $n-2$;
 \item the Clifford torus in $S^3\subset S^n$ has index $n+2$ in $S^n$;
 \item the equilateral minimal torus  in $S^5\subset S^n$ has index $n+3$ in $S^n$ ( {see \cite{KW1}});
 \item the Lawson surface $\xi_{g,1}$  in $S^3\subset S^n$ has index $2g+n$ in $S^n$ ( {see \cite{KW1} and also \cite{choesoret,KaW,Karp}}).%\blue
\end{enumerate}
\item
Even more recently, Medvedev \cite{Medvedev} applied our methods to the case of nonorientable {\em free boundary minimal surfaces} in the unit 4-ball ${\mathbb B}^4$,
showing that the critical M\"obius band $M\subset {\mathbb B}^4$ has index $5$, and conjecturing that this characterizes $M$.
\end{enumerate}
\end{remark}
{\footnotesize
\def\refname{References}

}
\end{document}